\documentclass[10pt, reqno]{amsart}
\RequirePackage{amssymb,amsmath,amsthm,color,graphicx} 

\RequirePackage[pdftex, colorlinks, colorlinks=true, citecolor=black, menucolor=black, linkcolor=black]{hyperref}
\hypersetup{
	colorlinks=true, 
	linkcolor=black, 
	urlcolor=blue, 
	linktoc=all 
}
\graphicspath{ {./Images/} }

\theoremstyle{plain}
\newtheorem{theorem}{Theorem}[section]

\newtheorem{definition}[theorem]{Definition}

\newtheorem*{theorem*}{Theorem}
\newtheorem*{proposition*}{Proposition}
\newtheorem*{definition*}{Definition}
\newtheorem*{lemma*}{Lemma}
\newtheorem*{corollary*}{Corollary}
\newtheorem*{example*}{Example}
\newtheorem*{remark*}{Remark}

\DeclareSymbolFont{yhlargesymbols}{OMX}{yhex}{m}{n} \DeclareMathAccent{\yhwidehat}{\mathord}{yhlargesymbols}{"62}
\numberwithin{equation}{section}

\newcommand{\C}{\mathbb{C}}
\newcommand{\R}{\mathbb{R}}
\newcommand{\Z}{\mathbb{Z}}

\renewcommand{\Im}{\operatorname{Im}}
\renewcommand{\Re}{\operatorname{Re}}

\renewcommand{\phi}{\varphi}

\newcommand{\qtq}[1]{\quad\text{#1}\quad}

\usepackage{enumitem}
\usepackage{amsrefs}
\usepackage{float} 
\usepackage{subcaption} 
\usepackage{lipsum} 
\usepackage{subcaption}

\begin{document}

\title[A finite-difference model of NLS soliton-potential interactions]{Modeling soliton-potential interactions for the 1-D cubic NLS via an invariant-preserving finite-difference method}

\author[A. Stone]{Anthony D. Stone Jr.}

\email{anthony\_stone35@mymail.eku.edu}

\author[C. Hogan]{Christopher C. Hogan}

\email{christopher.hogan@eku.edu}

\address{Department of Mathematics \& Statistics, Eastern Kentucky University, Richmond, KY, USA}

\begin{abstract}
    We consider the 1-D focusing cubic NLS with a smooth, spatially-dependent potential which decays rapidly as $x \rightarrow \pm \infty$. As is well known, under sufficient regularity conditions, solutions of the cubic NLS with such a spatially-dependent potential conserve both mass and energy. Additionally, in the absence of a potential, the 1-D focusing cubic NLS admits boosted soliton solutions. Furthermore, in the presence of a potential, the interaction of the potential with boosted soliton solutions to the free case is of active interest in the literature, particularly with respect to the dependence of this soliton-potential interaction on the velocity of the incident soliton. We adapt a finite-difference scheme to the setting of a smooth, rapidly-decaying spatially-dependent potential and show that this adaptation preserves the appropriate mass and energy invariants. Additionally, we use this algorithm to model soliton-potential interactions at a variety of incident soliton velocities.
\end{abstract}

\maketitle

\section{Introduction}\label{Intro}

In one spatial dimension, we consider the focusing cubic nonlinear Schr\"odinger equation (NLS) with an external potential: 

\begin{equation}\label{NLSV}
\begin{cases}
    i{\partial_t u} = - \frac{1}{2}{\partial^2_xu} + Vu - |u|^2u \\
    u(0,x) = u_0.
\end{cases}
\end{equation}
Under appropriate regularity conditions, the solution $u = u(t,x): \R \times \R \rightarrow \C$ satisfies the conservation laws

\begin{equation}\label{ContMassConservation}
M[u(t)] := \int_{\mathbb{R}} |u(t,x)|^2 \ dx \equiv M[u_0]
\end{equation}
and
\begin{equation}\label{ContEnergyConservation}
E[u(t)] := \int_{\mathbb{R}} \left(\frac{1}{4} |\nabla u(t,x)|^2 + \frac{1}{2} V(x)|u(t,x)|^2 - \frac{1}{4} |u(t,x)|^4 \right) \ dx \equiv E[u_0].
\end{equation}
Additionally, it is well-known that the underlying focusing cubic NLS,
\begin{equation}\label{NLS}
    i{\partial_t u} = - \frac{1}{2}{\partial^2_xu} - |u|^2u, \\
\end{equation}
with initial data of the form
\begin{equation}\label{NLSFreeSolitonData}
    u_0 = u(0,x) = e^{ixv}\operatorname{sech}(x - x_0) ,\quad x_0,v\in\mathbb{R},\footnote{Here, $v$ and $x_0$ are the velocity and translation parameters, respectively, of the soliton initial data \eqref{NLSFreeSolitonData} and corresponding free soliton solition \eqref{NLSFreeSolitonSol} to \eqref{NLS}.}
\end{equation}
produces soliton solutions
\begin{equation}\label{NLSFreeSolitonSol}
    u(t,x) = e^{ixv+{it}/{2}-itv^2/2} \operatorname{sech}(x-x_0-vt).
\end{equation}
In this work, we study an invariant-preserving finite-difference scheme originally developed by Delfour, Fourtin, and Payre in \cite{MR645840} and adapted to our setting, and we then use this adaptation to numerically model the evolution of soliton initial data under \eqref{NLSV} at a variety of velocities $v$.

\subsection*{Notation}\label{S: Notation}

Throughout, we let $\Delta x$ and $\Delta t$ represent our mesh sizes of space and time, respectively. Further, we let $m$ represent the spatial index and $n$ the time index, so that, for example, a restriction of a spacial function $f(x)$ to our discrete mesh nodes is written $f(m\Delta x)$. To keep notation compact when considering functions of space-time, we denote $u(n\Delta t,m\Delta x)$ via $u_m^n$. Further, when considering such a function with an emphasis on a particular index, we suppress the unnecessary index. Hence, the vector corresponding to the $n$-th instance in time is denoted via $u^n$. Finally, we notate matrices by the following convention: for $J, K \in \Z^{+}$, we denote the $j,k$-th element of the matrix $B \in \C^{J\times K}$ via $b_{j,k}$, where $j \in \{1, \dots, J\}$ and $k \in \{1, \dots, K\}$; hence $B = [b_{j,k}]$.

\subsection*{Background and motivation}\label{S: Background and motivation}
We were motivated to pursue this study by \cite{MR4714537}, which examined the dynamics of a highly boosted soliton (i.e. $v >> 1$) under \eqref{NLSV} with a potential which meets the following admissibility conditions (where $L^p$ denotes the usual Lebesgue space): 
\begin{definition}[Admissible, from \cite{MR4714537}]\label{admissible}
We say $V : \mathbb{R} \to \mathbb{R}$ is \emph{admissible} if the operator
\[
H := -\frac{1}{2}\partial_x^2 + V
\]
obeys the following:
\begin{itemize}
    \item $H$ has no zero energy resonance;
    
    \item $H$ has at most one simple negative eigenvalue $-\lambda$, with corresponding eigenfunction $\phi$ satisfying
    \[
    \phi \in L^1 \cap L^\infty;
    \]
    
    \item the potential obeys the pointwise bound
    \[
    |V(x)| \leq C \bigl(\sqrt{1 + x^2}\bigr)^{-s}
    \]
    for some $s > 2$.
\end{itemize}

We call $s$ the \emph{decay parameter} of $V$.
\end{definition}
\noindent In \cite{MR4714537}, the authors showed a fast soliton of the form \eqref{NLSFreeSolitonSol} will effectively be transmitted through the an admissible potential (as described in Definition~\ref{admissible}) under \eqref{NLSV}. 

A related low-velocity case was studied in the recent result \cite{caliaro2026confinementorbitalstabilitysolitons}, which examined slow soliton evolution on a family of graphs. Rather than imposing the conditions of Definition~\ref{admissible} on the potential, this result treats slow soliton-potential interactions on the line as a special case of a more general graph-theoretic result and showed that slow solitons are essentially reflected away from the potential.

Elsewhere in the literature, there is strong additional precedence for expecting the character of soliton-potential interactions to exhibit velocity-dependence. Concisely, it is well-known that, given appropriate initial data, the scattering theory of \eqref{NLSV} (see, eg. \cite{naumkinSharpAsymptoticBehavior2016}) centers around transmission and reflection coefficients 
which depend on the potential barrier $V$ and the soliton velocity $v$. These coefficients govern the ``splitting'' of appropriate data into pieces which are transmitted through the potential and reflected back by the potential, respectively. This phenomenon is particularly highlighted in the works which study various sub-cases of the $\delta$-potential case (where $v(x) = q\delta_0(x)$ for some $q \neq 0$) (for example \cites{datchevFastSolitonScattering2009, MR2335125, MR2318852, MR2342704}); in this $\delta$-potential setting, the transmission and reflection coefficients can be computed explicitly in terms of $v$ and the parameter $q$ (which controls the strength and character of the $\delta$-potential).

To our knowledge, for potentials satisfying the conditions of Definition~\ref{admissible}, the rigorous analysis of soliton-potential interactions at intermediate velocities is an open problem, even over finite time intervals such as those considered in \cite{MR4714537}. We thus view the outputs of our model at such velocity scales as merely indicative of expected behavior; rigorous theoretical treatment of the intermediate-velocity regime is necessary.

In this work, we prioritized consideration of a smooth, barrier-like nature of the potential which is strongly concentrated around the origin. Thus, for our computations, we adopted a simple geometric barrier potential found in \cite{PhysRevLett.129.040401} of the form
\begin{equation}\label{potential}
    V = \frac{h^2}{2mw^2} \biggl( \frac{1}{(1+(\frac{x}{w})^2)^2}\biggr).
\end{equation}
We will choose parameters $h = 0.3, m = 0.5,$ and $w = 0.1$. We selected this form for our potential because it is smooth and clearly decays as $x \rightarrow \pm \infty$, thereby aligning with conditions (2) and (3) of Definition~\ref{admissible}. This choice of potential has the added benefit of explicitly connecting the mathematical results noted above, particularly \cite{MR4714537} and \cite{caliaro2026confinementorbitalstabilitysolitons}, to potentials considered in the physics literature (eg. \cite{PhysRevLett.129.040401}). 

\subsection*{Results and organization thereof}\label{S: Results and Organization}
As the foundation for our numerical model, we adapt the finite-difference scheme introduced by Delfour, Fourtin, and Payre in \cite{MR645840}; thus, \eqref{NLSV} becomes:

\begin{equation}\label{Descrete_NLSV}
\begin{aligned}
i\frac{u_m^{n+1}-u_m^n}{\Delta t} =& - \frac{1}{4} \biggl[\frac{u_{m+1}^{n+1} - 2u_m^{n+1} + u_{m-1}^{n+1}}{|\Delta x|^2}\, + \frac{u_{m+1}^n - 2u_m^n + u_{m-1}^n}{|\Delta x|^2}\biggr] \\
&+ \biggl[{V(m\Delta x)} - \frac{|u_m^{n+1}|^2 + |u_m^n|^2}{2}\biggr]\frac{(u_m^{n+1} + u_m^n)}{2}.
\end{aligned}
\end{equation}

\noindent The approach of \cite{MR645840} provides two distinct advantages. The first advantage is purely pragmatic: the approach is simple to implement. The second advantage is more technical: both the original scheme of \cite{MR645840} and the adaptation of this algorithm which we describe below can be implemented to intrinsically preserve the appropriate mass and energy conservation properties, thus providing a straightforward verification mechanism for the accuracy of the results.

To this end, we state our main theoretical result, which says that solutions to \eqref{Descrete_NLSV} satisfy the the discrete analogs of \eqref{ContMassConservation} and \eqref{ContEnergyConservation}.

\begin{theorem}[Preservation of discrete conservation laws]\label{Discrete ConsLaws}
Let $\{u^n\}_{n = 1}^{\infty}$ satisfy \eqref{Descrete_NLSV}. Then for all $n \ge 0$,
\begin{equation}\label{DiscMassConservation}
M[u_m^n] = \sum_{m \in \Z} |u_m^n|^2 \Delta x = \sum_{m \in \Z} |u_m^0|^2 \Delta x = M[u_m^0] 
\end{equation}

and

\begin{equation}\label{DiscEnergyConservation}
\begin{aligned}
E[u_m^n] =\,& 
\frac{1}{4} \sum_{m \in \Z} \biggl|\frac{u_{m+1}^n - u_m^n}{\Delta x}\biggr|^2 \Delta x 
\\&+ \frac{1}{2} \sum_{m \in \Z}V(m\Delta x)|u_m^n|^2 \Delta x - \frac{1}{4} \sum_{m \in \Z} |u_m^n|^4 \Delta x
= E[u_m^0].
\end{aligned}
\end{equation}
\end{theorem}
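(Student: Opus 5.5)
We follow the classical energy-method argument of Delfour, Fortin, and Payre \cite{MR645840}. The idea is to test the scheme \eqref{Descrete_NLSV} against two discrete multipliers and take real parts: $\overline{(u_m^{n+1}+u_m^n)}$ gives mass conservation, and $\overline{(u_m^{n+1}-u_m^n)}$ gives energy conservation. Throughout we assume $u^n\in\ell^2(\Z)$ with $\sum_m |u_{m+1}^n-u_m^n|^2<\infty$ and $u^n\in\ell^4$. Boundary terms from summation by parts then vanish; on a finite grid one would instead use periodic or Dirichlet conditions. Writing $D_+ f_m = (f_{m+1}-f_m)/\Delta x$, the discrete Laplacian satisfies
\[
\sum_m \frac{f_{m+1}-2f_m+f_{m-1}}{|\Delta x|^2}\,\overline{g_m}\,\Delta x = -\sum_m D_+f_m\,\overline{D_+ g_m}\,\Delta x .
\]
We also write $w_m = \tfrac12(u_m^{n+1}+u_m^n)$ and $N_m = V(m\Delta x) - \tfrac12\bigl(|u_m^{n+1}|^2+|u_m^n|^2\bigr)$. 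The scheme then reads $i(u^{n+1}_m-u^n_m)/\Delta t = -\tfrac12 \Delta_h w_m + N_m w_m$, where $\Delta_h$ is the second difference.

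\textbf{Mass.} Multiply the scheme by $\overline{w_m}\,\Delta x$, sum over $m$, and take imaginary parts. On the right, summation by parts gives $\tfrac12\sum|D_+w_m|^2\Delta x + \sum N_m|w_m|^2\Delta x$, which is real because $V$ and $N_m$ are real. On the left, $\Re\bigl[(u^{n+1}_m-u^n_m)\overline{(u^{n+1}_m+u^n_m)}\bigr] = |u^{n+1}_m|^2-|u^n_m|^2$. Hence
\[
\frac{1}{2\Delta t}\sum_m \bigl(|u^{n+1}_m|^2-|u^n_m|^2\bigr)\Delta x = 0,
\]
and induction on $n$ gives \eqref{DiscMassConservation}.

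\textbf{Energy.} Multiply the scheme by $\overline{(u^{n+1}_m-u^n_m)}\,\Delta x$, sum, and take real parts. The left side becomes $i\sum|u^{n+1}_m-u^n_m|^2\Delta x/\Delta t$, which is purely imaginary, so its real part is zero. On the right, the Laplacian term after summation by parts is
\[
\tfrac14\Re\sum_m (D_+u^{n+1}_m + D_+u^n_m)\overline{(D_+u^{n+1}_m-D_+u^n_m)}\Delta x = \tfrac14\sum_m\bigl(|D_+u^{n+1}_m|^2-|D_+u^n_m|^2\bigr)\Delta x .
\]
This is exactly the increment of the kinetic part of $E$. For the potential term, $\Re\bigl[\tfrac12 V(u^{n+1}+u^n)\overline{(u^{n+1}-u^n)}\bigr]=\tfrac12 V\bigl(|u^{n+1}|^2-|u^n|^2\bigr)$. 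The Crank–Nicolson averaging of the nonlinearity is what makes the remaining term telescope:
\[
-\tfrac12\bigl(|u^{n+1}_m|^2+|u^n_m|^2\bigr)\cdot\tfrac12\bigl(|u^{n+1}_m|^2-|u^n_m|^2\bigr) = -\tfrac14\bigl(|u^{n+1}_m|^4-|u^n_m|^4\bigr).
\]
Summing the three contributions gives $E[u^{n+1}]-E[u^n]=0$, and \eqref{DiscEnergyConservation} follows by induction.

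The algebra is routine; the main care point is justifying the summation by parts on $\Z$. This requires the decay of $u^n$ so that the boundary terms $\lim_{M\to\infty}\bigl(D_+u_M\,\overline{g_M} - D_+u_{-M-1}\,\overline{g_{-M}}\bigr)$ vanish. It also requires $V$ to be bounded, as guaranteed by \eqref{potential}, so that every sum converges absolutely. We take these as the "appropriate regularity" implicit in the statement, rather than proving existence of solutions to the implicit scheme.
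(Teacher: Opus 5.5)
Your proposal is correct and follows the paper's argument: for mass, multiply by $\overline{u^{n+1}_m+u^n_m}$ (up to the factor $\tfrac12\Delta x$) and take imaginary parts; for energy, multiply by $\overline{u^{n+1}_m-u^n_m}$ and take real parts, with the Crank--Nicolson averaging making the quartic term telescope. The differences are cosmetic. In the mass step you use summation by parts to make the Laplacian term manifestly real ($\tfrac12\sum|D_+w_m|^2\Delta x$), where the paper pairs conjugate cross terms. You also state explicitly the $\ell^2$ decay needed to discard boundary terms, which the paper leaves implicit.
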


The remainder of the paper is organized as follows. In section~\ref{S: Mass and Energy Conservation}, we prove Theorem~\ref{Discrete ConsLaws}. In section~\ref{S: computingsoliton-potential}, we describe the implementation of our adaptation of the scheme, highlighting the importance of the mass and energy conservation as a stopping condition. Finally, in section~\ref{S: Examples}, we present and discuss numerical results for soliton-potential interactions at three velocities which exhibit the expected in the three velocity regimes. These velocities are selected to highlight the effects of the incident velocity of the soliton data on the nature of soliton-potential interactions.

\section{Mass and energy conservation}\label{S: Mass and Energy Conservation}

In this section, we present the proof of Theorem~\ref{Discrete ConsLaws}. The proof technique here closely follows the proof presented in \cite{MR645840}, with modification to signs and the replacement of the potential term $\alpha \Delta x$ with a more general spatially-dependent potential $V(m\Delta x)$.
\begin{proof}[Proof of Theorem~\ref{Discrete ConsLaws}]
To show that the scheme satisfies the discrete conservation of mass, we multiply \eqref{Descrete_NLSV} by $(\bar{u}_m^{n+1} + \bar{u}_m^n)$, sum over the spatial index $m \in \Z$, and take the imaginary part of the result. We shall work term by term. Multiplying the left-hand term of \eqref{Descrete_NLSV} by $(\bar{u}_m^{n+1} + \bar{u}_m^n)$ yields
\begin{equation}\label{LHS_Discrete_NLSV}
T_m^n = i\frac{u_m^{n+1}-u_m^n}{\Delta t}(\bar{u}_m^{n+1} + \bar{u}_m^n) = \frac{i}{\Delta t}\bigl[\,|u_m^{n+1}|^2  - |u_m^n|^2 + 2i\Im(\bar{u}_m^{n+1}u_m^n) \bigr].
\end{equation}
Furthermore, multiplying the first term on the RHS of \eqref{Descrete_NLSV} by $(\bar{u}_m^{n+1} + \bar{u}_m^n)$ and regrouping yields
\begin{equation}\label{RHS_Discrete_NLSV_term_1}
\begin{aligned}
D_m^n =\,& - \frac{1}{4} \biggl[\frac{u_{m+1}^{n+1} - 2u_m^{n+1} + u_{m-1}^{n+1}}{|\Delta x|^2}\, + \frac{u_{m+1}^n - 2u_m^n + u_{m-1}^n}{|\Delta x|^2}\biggr](\bar{u}_m^{n+1} + \bar{u}_m^n) \\
=\,&  \frac{-1}{4|\Delta x|^2}\biggl\{\bigl[- 2|u_m^{n+1}|^2 - 2|u_m^n|^2 - 4\Re{(u_m^{n+1}\bar{u}_m^n)}\bigr] \\
&\quad\quad\quad\quad+ (u_{m+1}^{n+1}\bar{u}_m^{n+1} + u_{m-1}^{n+1}\bar{u}_m^{n+1}) + (u_{m+1}^n\bar{u}_m^n + u_{m-1}^n\bar{u}_m^n) \\
&\quad\quad\quad\quad+ (u_{m+1}^{n+1}\bar{u}_m^n  + u_{m-1}^n\bar{u}_m^{n+1}) + (u_{m+1}^n\bar{u}_m^{n+1} + u_{m-1}^{n+1}\bar{u}_m^n) \biggr\}.
\end{aligned}
\end{equation}
Finally, multiplying the second term on the RHS of \eqref{Descrete_NLSV} by $(\bar{u}_m^{n+1} + \bar{u}_m^n)$ and regrouping yields
\begin{equation}\label{RHS_Discrete_NLSV_term_2}
\begin{aligned}
V_m^n &= \biggl[{V(m\Delta x)} - \frac{|u_m^{n+1}|^2 + |u_m^n|^2}{2}\biggr]\frac{(u_m^{n+1} + u_m^n)}{2} (\bar{u}_m^{n+1} + \bar{u}_m^n) \\
&= \biggl[\frac{V(m\Delta x)}{2} - \frac{|u_m^{n+1}|^2 + |u_m^n|^2}{4}\biggr]\bigl(|\bar{u}_m^{n+1}|^2 + |\bar{u}_m^n|^2 + 2\Re{(u_m^{n+1}\bar{u}_m^n)}\bigr).
\end{aligned}
\end{equation}
We note that since by \eqref{RHS_Discrete_NLSV_term_2}, $V_m^n \in \R$ (and hence has no imaginary part, independent of our choice of $m \in \Z$), if we sum $V_m^n$ over all $m \in \Z$ and take the imaginary part of this sum, the result still vanishes:
\begin{equation}\label{Sum_RHS_Discrete_NLSV_term_2}
\Im{\biggl\{\sum_{m \in \Z} V_m^n\biggr\}} = 0.
\end{equation}
Next, we treat $D_m^n$ from \eqref{RHS_Discrete_NLSV_term_1}. Summing the result of \eqref{RHS_Discrete_NLSV_term_1} over $m \in \Z$, we have
\begin{equation}\label{Sum_Computation_RHS_Discrete_NLSV_term_1}
\begin{aligned}
\sum_{m \in \Z} D_m^n =\,&  \frac{-1}{4|\Delta x|^2}\biggl\{\sum_{m \in \Z} \bigl[- 2|u_m^{n+1}|^2 - 2|u_m^n|^2 - 4\Re{(u_m^{n+1}\bar{u}_m^n)}\bigr] \\
&+ \sum_{m \in \Z}(u_{m+1}^{n+1}\bar{u}_m^{n+1} + u_{m-1}^{n+1}\bar{u}_m^{n+1}) + \sum_{m \in \Z}(u_{m+1}^n\bar{u}_m^n + u_{m-1}^n\bar{u}_m^n) \\
&+ \sum_{m \in \Z}(u_{m+1}^{n+1}\bar{u}_m^n  + u_{m-1}^n\bar{u}_m^{n+1}) + \sum_{m \in \Z}(u_{m+1}^n\bar{u}_m^{n+1} + u_{m-1}^{n+1}\bar{u}_m^n) \biggr\}.
\end{aligned}
\end{equation}
Note that first sum on the RHS of \eqref{Sum_Computation_RHS_Discrete_NLSV_term_1} is real valued. Further,  by observing the conjugacy properties of subsequent terms in each of the remaining four sums, we see these sums are also real-valued. Hence, $\sum_{m \in \Z} D_m^n \in \R$, and we obtain 
\begin{equation}\label{Sum_RHS_Discrete_NLSV_term_1}
\Im{\biggl\{\sum_{m \in \Z} D_m^n\biggr\}} = 0.
\end{equation}
Finally, we address $T_m^n$. Note by a careful inspection of \eqref{LHS_Discrete_NLSV} we have that
\begin{equation}\label{Sum_Computation_LHS_Discrete_NLSV}
\Im{\biggl\{\sum_{m \in \Z} T_m^n \biggr\}} 
= \sum_{m \in \Z} \frac{1}{\Delta t} \big[|u_m^{n+1}|^2 - |u_m^{n}|^2\big].
\end{equation}
By \eqref{Descrete_NLSV}, we have that $T_m^n = D_m^n + V_m^n$, and similarly \begin{equation}\label{Im_Sum_Discrete_NLSV}
\Im{\biggl\{\sum_{m \in \Z} T_m^n\biggr\}} = \Im{\biggl\{\sum_{m \in \Z} D_m^n\biggr\}} + \Im{\biggl\{\sum_{m \in \Z} V_m^n\biggr\}}.   
\end{equation}
Hence, combining our results from \eqref{Sum_Computation_LHS_Discrete_NLSV}, \eqref{Sum_RHS_Discrete_NLSV_term_1}, and \eqref{Sum_RHS_Discrete_NLSV_term_2} via \eqref{Im_Sum_Discrete_NLSV}, we deduce that for all $n \geq 0$,
\begin{equation*}
\sum_{m \in \Z} \frac{1}{\Delta t} \big[|u_m^{n+1}|^2 - |u_m^{n}|^2\big] = 0,
\end{equation*}
which implies that for all $n \geq 0$, 
\begin{equation}\label{InductionHypothesis}
\sum_{m \in \Z} |u_m^{n+1}|^2 \Delta x = \sum_{m \in \Z} |u_m^{n}|^2 \Delta x.
\end{equation}
Therefore, we obtain \eqref{DiscMassConservation} by using \eqref{InductionHypothesis} together with a simple induction on $n$.

To show that the scheme satisfies discrete conservation of energy, we multiply \eqref{Descrete_NLSV} by $(\bar{u}_m^{n+1} - \bar{u}_m^n)$, sum over the spatial index $m \in \Z$, and take the real part of the result. Once again, we shall work term by term. Multiplying the left-hand term of \eqref{Descrete_NLSV} by $(\bar{u}_m^{n+1} + \bar{u}_m^n)$ yields
\begin{equation*}
\Tilde{T}_m^n = i\frac{u_m^{n+1}-u_m^n}{\Delta t}(\bar{u}_m^{n+1} - \bar{u}_m^n) = \frac{i}{\Delta t}\bigl[\,|u_m^{n+1}|^2  + |u_m^n|^2 - 2\Re(\bar{u}_m^{n+1}u_m^n) \bigr],
\end{equation*}
so that $\Tilde{T}_m^n\in i\R$ and hence has no real part, independent of our choice of $m \in \Z$. Summing $\Tilde{T}_m^n$ over all $m \in \Z$ and taking the real part of this sum, we have that
\begin{equation}\label{Sum_LHS_Discrete_NLSV_term_EC}
\Re{\biggl\{\sum_{m \in \Z} \Tilde{T}_m^n\biggr\}} = 0.
\end{equation}
Now, multiplying the first term on the RHS of \eqref{Descrete_NLSV} by $(\bar{u}_m^{n+1} - \bar{u}_m^n)$, then summing over all $m \in \Z$, regrouping, re-indexing, and regrouping again, we have
\begin{equation}\label{RHS_Discrete_NLSV_term_1_EC}
\begin{aligned}
\sum_{m\in\Z^{+}}\Tilde{D}_m^n =\,& - \frac{1}{4} \biggl[\frac{u_{m+1}^{n+1} - 2u_m^{n+1} + u_{m-1}^{n+1}}{|\Delta x|^2}\, + \frac{u_{m+1}^n - 2u_m^n + u_{m-1}^n}{|\Delta x|^2}\biggr](\bar{u}_m^{n+1} - \bar{u}_m^n) \\
=\,&  \frac{-1}{4|\Delta x|^2}\biggl\{\sum_{m\in\Z^{+}} \bigl[u_{m+1}^{n+1} - u_m^{n+1} + u_{m+1}^{n} - u_m^n\bigr] (\bar{u}_m^{n+1} - \bar{u}_m^n)\\
&\quad\quad\quad\quad- \sum_{m\in\Z^{+}} \bigl[u_m^{n+1} - u_{m-1}^{n+1} + u_m^n - u_{m-1}^n\bigr](\bar{u}_m^{n+1} - \bar{u}_m^n)\biggr\} \\
=\,&  \frac{-1}{4|\Delta x|^2}\biggl\{\sum_{m\in\Z^{+}} \bigl[u_{m+1}^{n+1} - u_m^{n+1} + u_{m+1}^{n} - u_m^n\bigr] (\bar{u}_m^{n+1} - \bar{u}_m^n)\\
&\quad\quad\quad\quad- \sum_{m\in\Z^{+}} \bigl[u_{m+1}^{n+1} - u_{m}^{n+1} + u_{m+1}^n - u_{m}^n\bigr](\bar{u}_{m+1}^{n+1} - \bar{u}_{m+1}^n)\biggr\} \\
=\,&  \frac{1}{4|\Delta x|^2}\biggl\{\sum_{m\in\Z^{+}} \bigl[(\bar{u}_{m+1}^{n+1} - \bar{u}_m^{n+1}) - (\bar{u}_{m+1}^n - \bar{u}_m^n)\bigr] \\
&\quad\quad\quad\quad\times \bigl[(u_{m+1}^{n+1} - u_m^{n+1}) + (u_{m+1}^{n} - u_m^n)\bigr] \biggr\} \\
=\,&\frac{1}{4} \sum_{m \in \Z} \biggl\{ \biggl|\frac{u_{m+1}^{n+1} - u_m^{n+1}}{\Delta x}\biggr|^2 - \biggl|\frac{u_{m+1}^n - u_m^n}{\Delta x}\biggr|^2 \\
&\quad\quad\quad\quad+ \frac{i}{2|\Delta x|^2} \Im{\bigl((\bar{u}_{m+1}^n - \bar{u}_m^n)(u_{m+1}^{n+1} - u_{m+1}^n)\bigr)} \biggr\}.
\end{aligned}
\end{equation}
Taking the real part of \eqref{RHS_Discrete_NLSV_term_1_EC} yields
\begin{equation}\label{Re_Sum_RHS_Discrete_NLSV_term_1_EC}
\Re{\biggl\{\sum_{m \in \Z} \Tilde{D}_m^n\biggr\}} = \frac{1}{4} \sum_{m \in \Z} \biggl\{ \biggl|\frac{u_{m+1}^{n+1} - u_m^{n+1}}{\Delta x}\biggr|^2 - \biggl|\frac{u_{m+1}^n - u_m^n}{\Delta x}\biggr|^2\biggr\}.
\end{equation}
Further, multiplying the second term on the RHS of \eqref{Descrete_NLSV} by $(\bar{u}_m^{n+1} - \bar{u}_m^n)$, we have
\begin{equation}\label{RHS_Discrete_NLSV_term_2_EC}
\begin{aligned}
\Tilde{V}_m^n &= \biggl[{V(m\Delta x)} - \frac{|u_m^{n+1}|^2 + |u_m^n|^2}{2}\biggr]\frac{(u_m^{n+1} + u_m^n)}{2}(\bar{u}_m^{n+1} - \bar{u}_m^n) \\
&= \biggl[\frac{V(m\Delta x)}{2} - \frac{|u_m^{n+1}|^2 + |u_m^n|^2}{4}\biggr]\bigl(|\bar{u}_m^{n+1}|^2 - |\bar{u}_m^n|^2 + 2i\Im({u_m^{n+1}\bar{u}_m^n)}\bigr).
\end{aligned}
\end{equation}
Thus, summing \eqref{RHS_Discrete_NLSV_term_2_EC} over all $m \in \Z$ and taking the real part, we have
\begin{equation}\label{Re_Sum_RHS_Discrete_NLSV_term_2_EC}
\Re{\biggl\{\sum_{m \in \Z} \Tilde{V}_m^n\biggr\}} = \frac{1}{2} \sum_{m \in \Z}V(m\Delta x)(|u_m^{n+1}|^2 - |u_m^n|^2)  - \frac{1}{4} \sum_{m \in \Z} (|u_m^{n+1}|^4 - |u_m^n|^4).
\end{equation}
Again appealing to \eqref{Descrete_NLSV}, we have that $\Tilde{T}_m^n = \Tilde{D}_m^n + \Tilde{V}_m^n$, and similarly \begin{equation}\label{Re_Sum_Discrete_NLSV}
\Re{\biggl\{\sum_{m \in \Z} \Tilde{T}_m^n\biggr\}} = \Re{\biggl\{\sum_{m \in \Z} \Tilde{D}_m^n\biggr\}} + \Re{\biggl\{\sum_{m \in \Z} \Tilde{V}_m^n\biggr\}}.   
\end{equation}
Hence, combining our results from \eqref{Sum_LHS_Discrete_NLSV_term_EC}, \eqref{Re_Sum_RHS_Discrete_NLSV_term_1_EC}, and \eqref{Re_Sum_RHS_Discrete_NLSV_term_2_EC} via \eqref{Re_Sum_Discrete_NLSV}, we have for all $n \geq 0$,
\begin{align*}
0 =\,& \frac{1}{4} \sum_{m \in \Z} \biggl\{\biggl|\frac{u_{m+1}^{n+1} - u_m^{n+1}}{\Delta x}\biggr|^2 - \biggl|\frac{u_{m+1}^n - u_m^n}{\Delta x}\biggr|^2 \biggr\}\\
&+ \frac{1}{2} \sum_{m \in \Z}V(m\Delta x)(|u_m^{n+1}|^2 - |u_m^n|^2)  - \frac{1}{4} \sum_{m \in \Z} (|u_m^{n+1}|^4 - |u_m^n|^4),
\end{align*}
and from this result, we can immediately deduce \eqref{DiscEnergyConservation}. Therefore, our difference scheme satisfies the discrete conservations laws \eqref{DiscMassConservation} and \eqref{DiscEnergyConservation}.
\end{proof}

\section{The implementation scheme}\label{S: computingsoliton-potential}

To compute soliton-potential interactions, we utilize our adaptation of the finite-difference algorithm described in \cite{MR645840} for one spatial dimension. Of course, to facilitate computation we must truncate the vectors $u^n$ to finitely many points in space and may also only compute over a finite number of time-steps. However, since both the solitons and potentials of concern decay rapidly towards $\pm\infty$, we found it both sufficient and practical to impose artificial boundary conditions by defining a wide spatial interval centered at $0$ and setting $u_0 = u_{M+1} = 0$ at every time-step.

Furthermore, although the authors of \cite{MR645840} use implementations which employ a moving frame of reference to track the center of the soliton data in space, we found this unnecessary provided the interaction was modeled on a sufficiently large space interval for a sufficiently short time. However, in certain settings, an implementation with such a moving frame of reference may enhance the preservation of the conservation properties and allow for numerical studies over a longer time-frame, so this may be an interesting avenue of future exploration.

Now, to implement our finite-difference algorithm, we must first rewrite the scheme given in \eqref{Descrete_NLSV} in a matrix form. We thus define the matrices $I, A,$ and $V$ each with dimension $(M + 1) \times (M + 1)$, where $M + 1$ is the number of space steps over the finite spatial interval defined previously. Thus, we let $I$ be the identity matrix (where we then set the ${1,1}$ and ${M + 1,M + 1}$ entries to $0$). Further, we let $A$ be the matrix associated with the discrete Laplacian operator. Since we are in the case of one spatial dimension, $A$ is tri-diagonal, as noted in \cite{MR645840}; specifically, for $A = [a_{j,k}] \in \C^{(M + 1)\times(M + 1)}$, where 
\[
a_{j,k} = \begin{cases}
    \frac{-2}{|\Delta x|^2}, & \text{if} \quad j = k, \qtq{with} j,k \in \{2,\dots, M\} \\
    \frac{1}{|\Delta x|^2}, & \text{if} \quad |j - k| = 1, \qtq{with} j,k \in \{2,\dots, M\} \\
    0, &\text{otherwise}.
\end{cases}
\]
Finally, let $V$ be a diagonal matrix associated with the potential. Explicitly, for matrix $V = [v_{j,k}]$ where $j,k \in \{1, \dots, M\}$, let $v_{m,m} = V(m \Delta x)$ with $m \in \{2,\dots, M\}$, and $v_{j,k} = 0$ otherwise. Note this construction sets the boundary elements in each matrix to $0$ and hence also aids in enforcing the boundary conditions within the linear portion of the scheme.

We next consider the treatment of the nonlinear term in \eqref{Descrete_NLSV}. For $u^n$ known, we define a mapping $F_n(u)$ from $\C$ to $\C$ with the $m$-th component given by

\begin{equation*}
    F_n(u)_m =\begin{cases}
        -\frac{1}{2}\bigl[{|u_m|^2 + |u_m^n|^2}\bigr](u_m + u_m^n) &\text{if} \quad u_m \neq u_m^n,\\
        -|u_m|^2u_m &\text{if} \quad u_m = u_m^n.
    \end{cases} 
\end{equation*}

Now, we can rewrite our numerical scheme \eqref{Descrete_NLSV} to obtain the form
\begin{equation}\label{MatrixForm}
\frac{i}{\Delta t}\bigl(u_m^{n+1} - u_m^n\bigr) + \biggl(\frac{A}{2} - V\biggr)\biggl(\frac{u_m^{n+1} + u_m^n}{2}\biggr) - F_n(u^{n+1}) = 0.
\end{equation}
To handle the nonlinearity appropriately, we solve the system given in \eqref{MatrixForm} using a successive approximation method, as suggested by \cite{MR645840}. First, to solve $u^{n+1}$ from $u^{n},$ we consider the sequence $\{u_p\}_{p \in \{0,1, \dots\}}$ of complex vectors of size $M + 1$ with $u_0 = u^n$.
Explicitly, to compute $u^1$, compute $u^0$ using the initial data given in \eqref{NLS} and the consequential recursive relation
\begin{equation*}
    u_{p+1} = \biggl[\frac{i}{\Delta t}I \, +\,\frac{A}{4} \,-\, \frac{V}{2}\biggr]^{-1}\biggl[\biggl(\frac{i}{\Delta t}I \, -\,\frac{A}{4} \,+\, \frac{V}{2}\biggr)u^n + F_n(u_p)\biggr].
\end{equation*}

For the stopping point on this recursion, we checked if the mass and energy of $u_{p+1}$, calculated using \eqref{DiscMassConservation} and \eqref{DiscEnergyConservation}, respectively, were conserved both to the desired degree. If the iteration meets these conditions, we set $u^{n+1} = u_{p+1}$. 

\section{Examples of soliton-potential interaction models across several velocity regimes}\label{S: Examples}

We now present some outputs that we have achieved using this method with the conditions specified in each example below. As previously stated, for each example we used a geometric potential modeled from \cite{PhysRevLett.129.040401} with parameters $h = 0.3$, $m = 0.5$, and $w = 0.1$; however, other potentials may be explored with the algorithm. We imposed a mass tolerance of $1 \times 10^{-6}$, and we enforced energy conservation to $1 \times 10^{-4}$. Furthermore, we found that mesh sizes $\Delta t = 0.001$ and $\Delta x = 0.01$ were sufficient to achieve apparent convergence for the following examples. Finally, in each of the scenarios below, we set $x_0 = -5$.

\subsection{The high-velocity regime}

As an example of the high-velocity regime, we set $v = 10, -15 \leq x \leq 15, \text{ and } 0 \leq t \leq 1.2$. 
\begin{figure}[h]
\centering
\makebox[\textwidth][c]{\includegraphics[width = 1\linewidth]{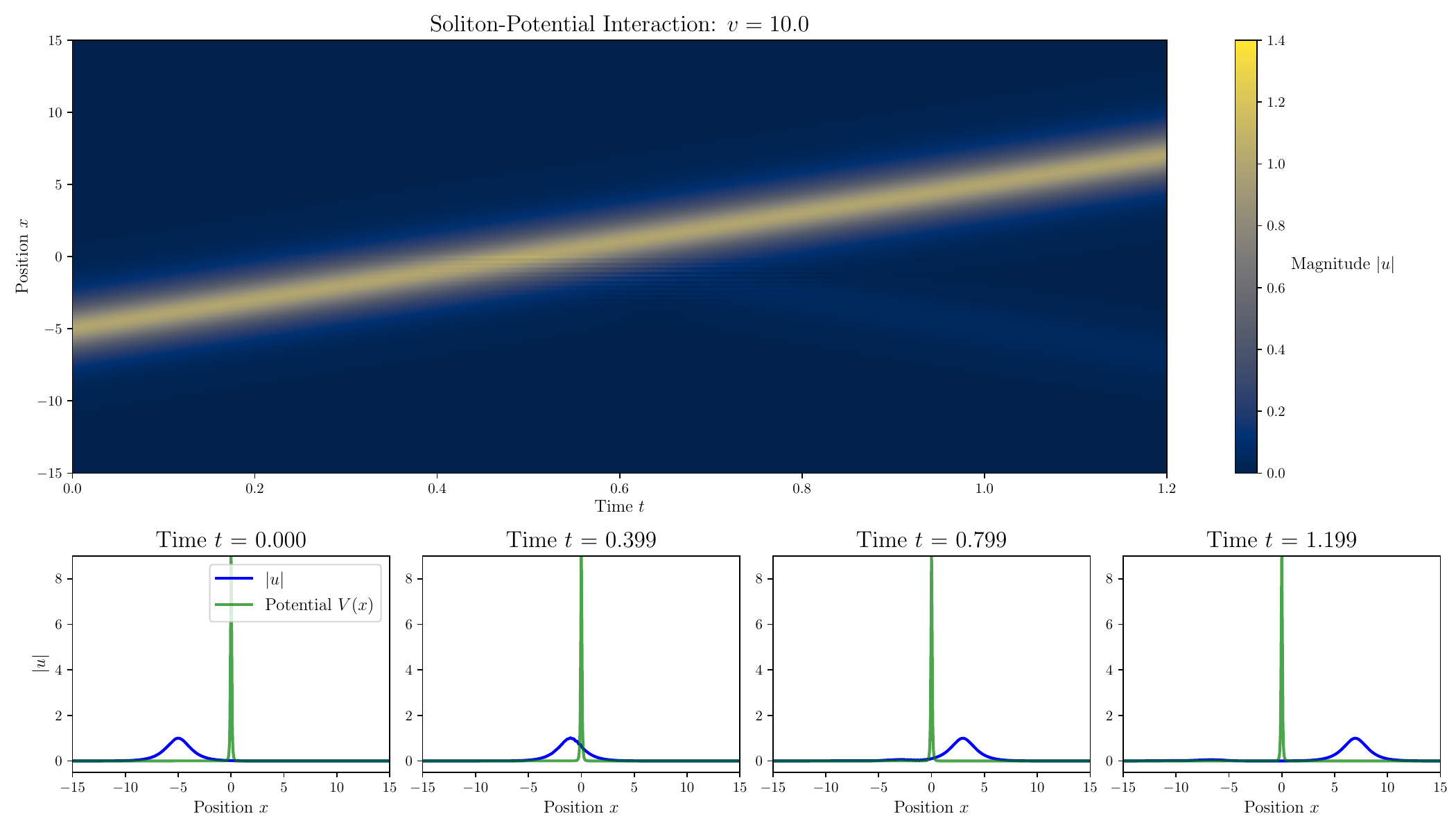}}
\caption{The High-Velocity Case ($v = 10$)}\label{SolPotent10p00}
\end{figure}
\textsc{Figure}~\ref{SolPotent10p00} shows the color-map and four equally spaced frames of the soliton-potential interaction with these parameters. From this example, we see that for $v = 10$, the soliton is essentially transmitted through the potential. This behavior corresponds to the transmission of high velocity solitons with a rapidly decaying potential, as shown in \cite{MR4714537}.

We note that for fast soliton cases both mass energy may be enforced to a higher degree.
\begin{figure}[h]
\centering
\begin{subfigure}[b]{0.49\textwidth}
\makebox[\textwidth][c]{\includegraphics[width = 1\linewidth]{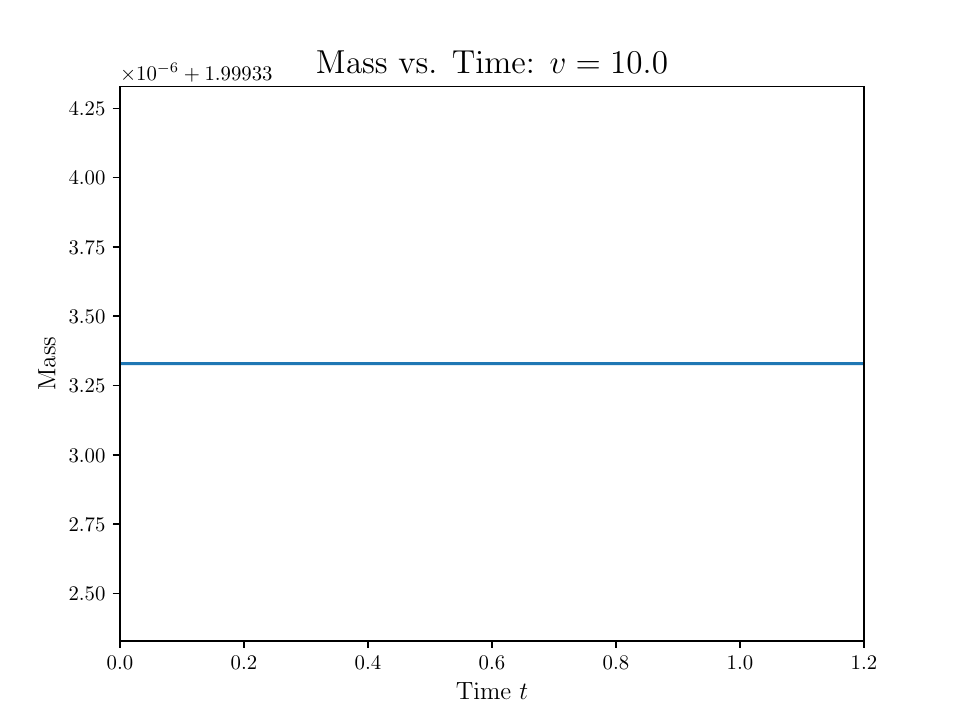}}
\caption{Mass Conservation ($v = 10$)}
\end{subfigure}
\begin{subfigure}[b]{0.49\textwidth}
\makebox[\textwidth][c]{\includegraphics[width = 1\linewidth]{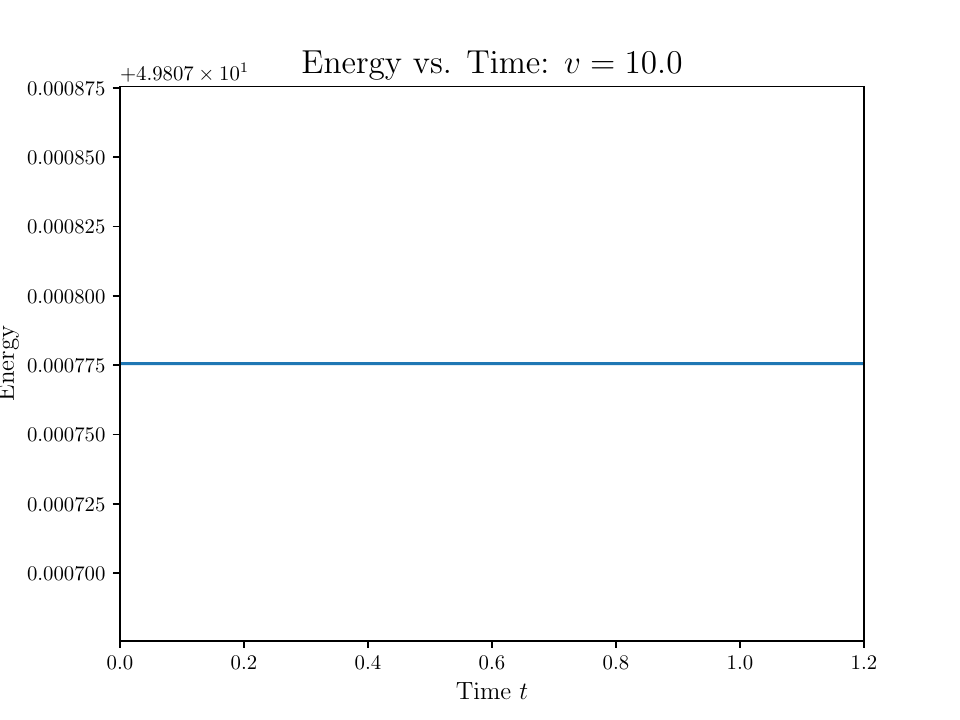}}
\caption{Energy Conservation ($v = 10$)}
\end{subfigure}
\caption{Tracking Mass and Energy Conservation ($v = 10$)}\label{MCEC10p00}
\end{figure}
\textsc{Figure}~\ref{MCEC10p00} shows the evolution of our mass energy calculations for $v = 10$. We see that, at this resolution, there is little to no change in these invariants.

\subsection{The low-velocity regime}

As an example of the low-velocity regime, we take $v = 0.96, -15 \leq x \leq 15, \text{ and } 0 \leq t \leq 12.5$.
\begin{figure}[h]
\centering
\makebox[\textwidth][c]{\includegraphics[width = 1\linewidth]{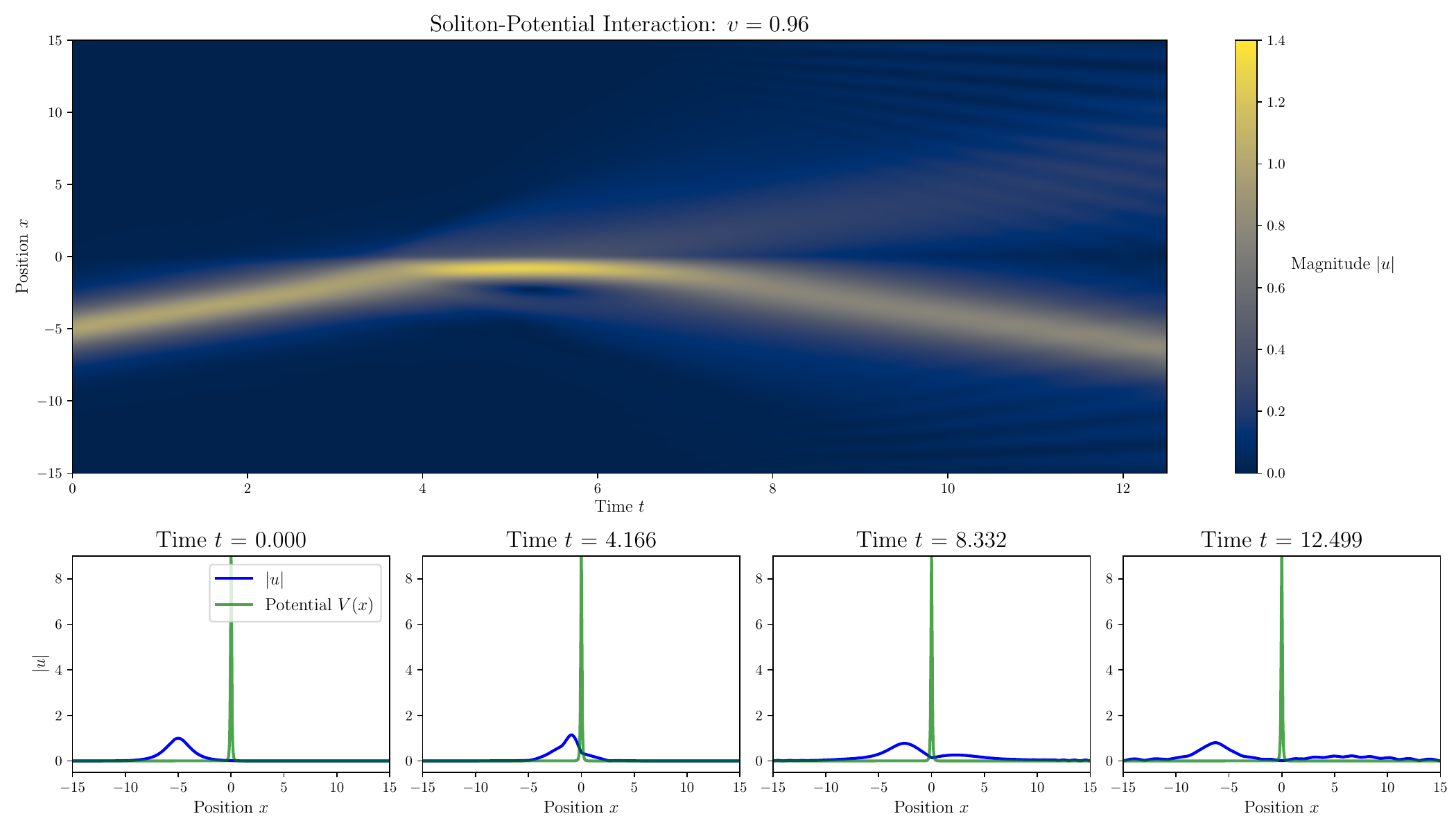}}
\caption{The Low-Velocity Case ($v = 0.96$)}\label{SolPotent0p96}
\end{figure}
In \textsc{Figure}~\ref{SolPotent0p96}, we illustrate the soliton-potential interaction under these parameters via a color-map and four equally spaced frames, as before. For $v = 0.96$, we see via \textsc{Figure}~\ref{SolPotent0p96} that the soliton is mostly reflected. Thus, this serves as our example for the low-velocity case, which was addressed at the theoretical level in the recent result \cite{caliaro2026confinementorbitalstabilitysolitons}.

For low-velocity cases, we observe greater mass and energy fluctuation, as shown in \textsc{Figure}~\ref{MCEC0p96}. 
\begin{figure}[h]
\centering
\begin{subfigure}[b]{0.47\textwidth}
\makebox[\textwidth][c]{\includegraphics[width = 1\linewidth]{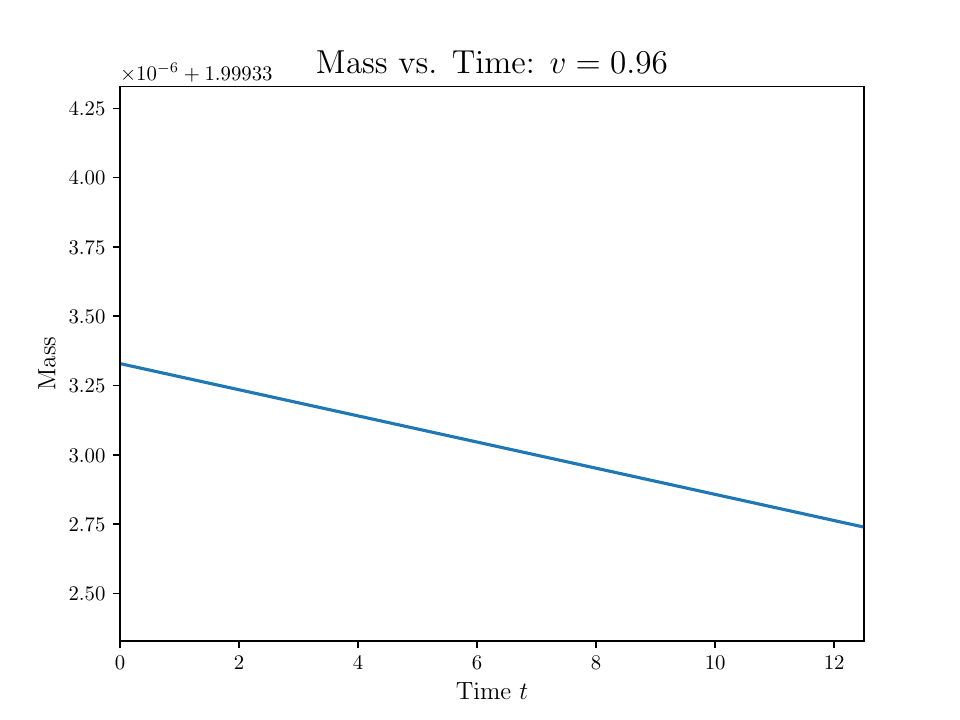}}
\caption{Mass Conservation ($v = 0.96$)}
\end{subfigure}
\begin{subfigure}[b]{0.47\textwidth}
\makebox[\textwidth][c]{\includegraphics[width = 1\linewidth]{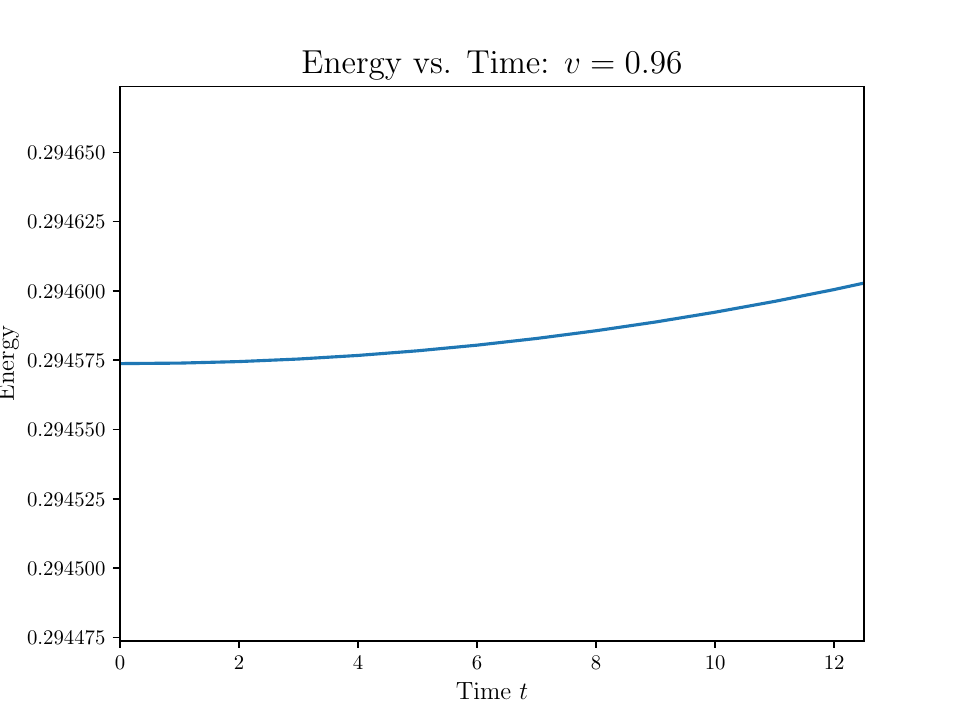}}
\caption{Energy Conservation ($v = 0.96$)}
\end{subfigure}
\caption{Tracking Mass and Energy Conservation ($v = 0.96$)}\label{MCEC0p96}
\end{figure}
However, any changes in these theoretical invariants are well within the desired bounds. It is interesting to note that the mass seems to exhibit a linear decrease and the energy seems to exhibit a superlinear increase over the time interval. We expect this accumulation of errors in the invariants arises as a result of the artificial boundary conditions imposed at the ends of the spatial interval; these small boundary errors seem to propagate as ripples from the edges of the spacial domain (as seen in the later times in \textsc{Figure}~\ref{SolPotent0p96}) and are likely more prominent in this case due to the time interval required to model soliton-potential interaction.

\subsection{The intermediate-velocity regime}

\begin{figure}[h]
\centering
\makebox[\textwidth][c]{\includegraphics[width = 1\linewidth]{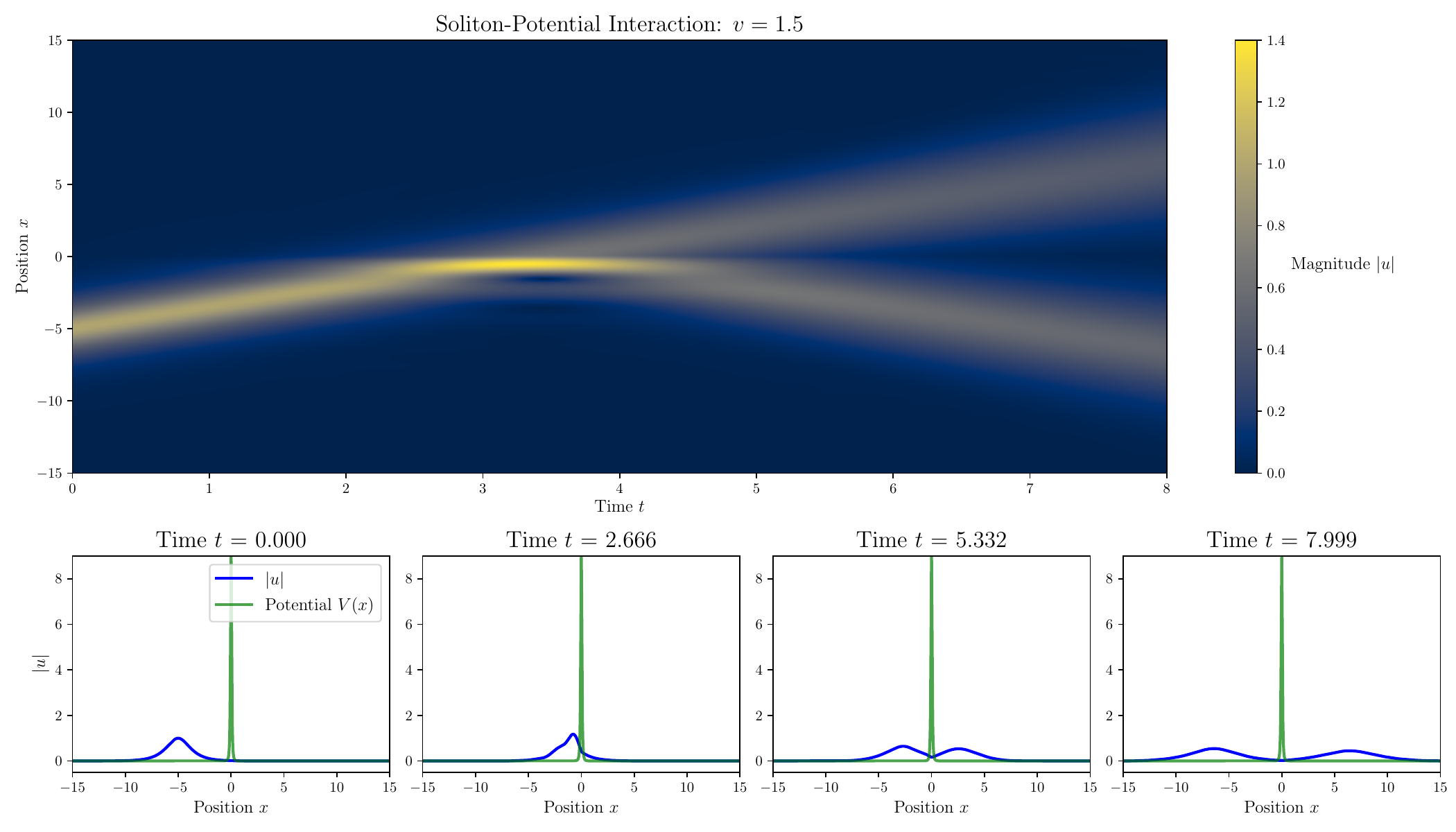}}
\caption{The Intermediate-Velocity Case ($v = 1.5$)}\label{SolPotent1p50}
\end{figure}
Finally, we examine the intermediate-velocity regime. To this end, we let $v = 1.5, -15 \leq x \leq 15, \text{ and } 0 \leq t \leq 8$. Once again, we provide the color-map and four equally spaced frames of the soliton-potential interaction under these parameters in \textsc{Figure}~\ref{SolPotent1p50}, where we observe an almost equal amount of transmission and reflection. Thus, this example exhibits splitting, as expected in the intermediate-velocity case.

Similar to the previous example, we observe that the mass and energy are relatively well-conserved at this resolution, as shown in \textsc{Figure}~\ref{MCEC1p50}. 
\begin{figure}[h]
\centering
\begin{subfigure}[b]{0.47\textwidth}
\makebox[\textwidth][c]{\includegraphics[width = 1\linewidth]{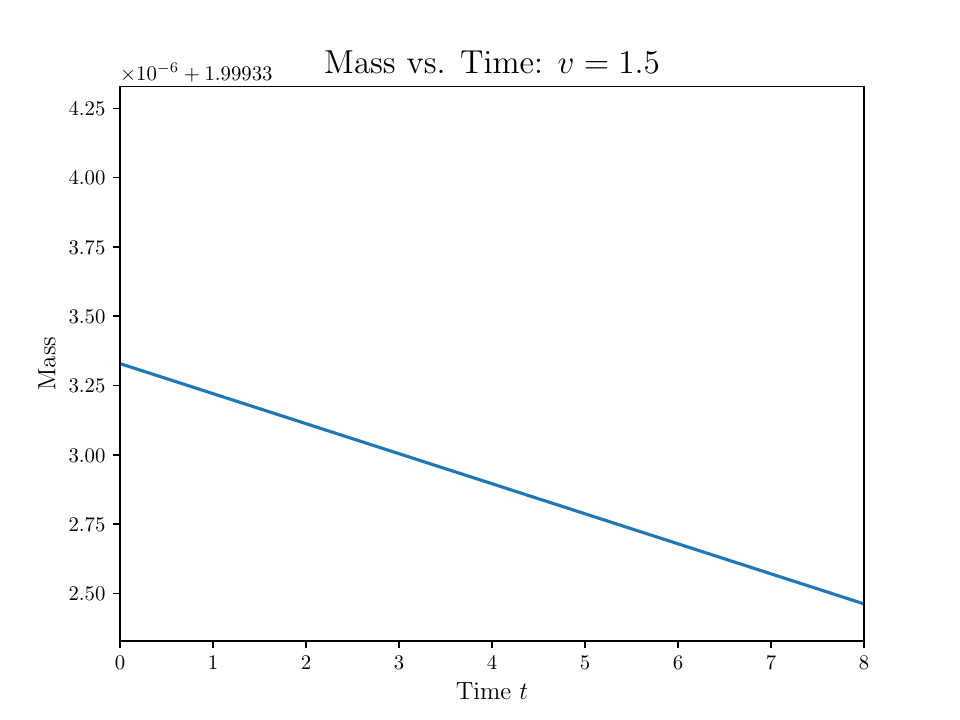}}
\caption{Mass Conservation ($v = 1.5$)}
\end{subfigure}
\begin{subfigure}[b]{0.47\textwidth}
\makebox[\textwidth][c]{\includegraphics[width = 1\linewidth]{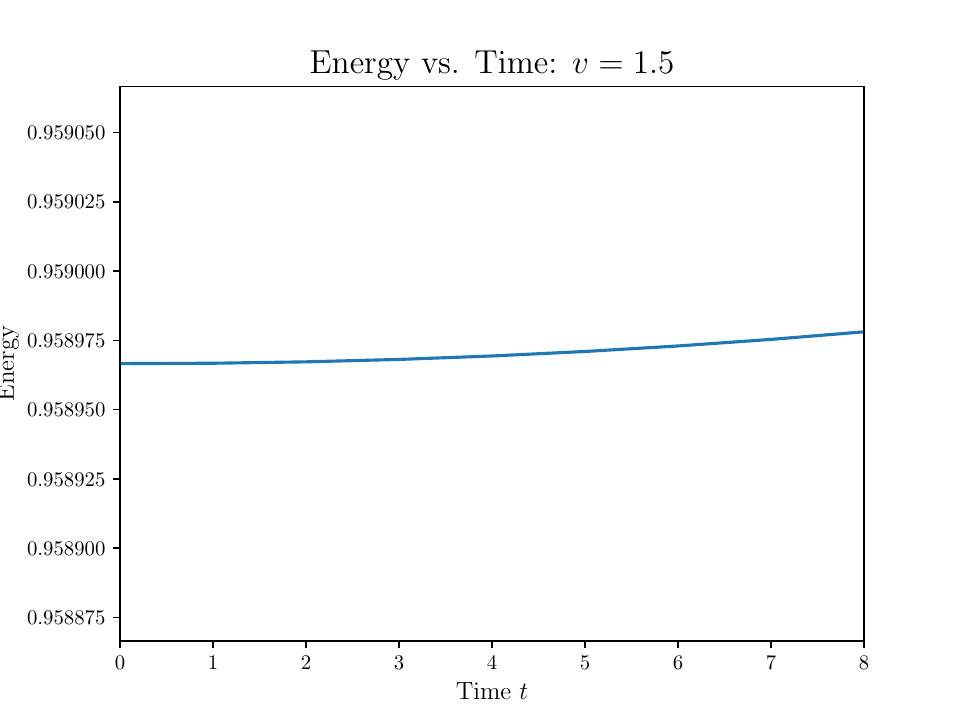}}
\caption{Energy Conservation ($v = 1.5$)}
\end{subfigure}
\caption{Tracking Mass and Energy Conservation ($v = 1.5$)}\label{MCEC1p50}
\end{figure}
We again observe that the mass and energy vary in similar fashion to the low-velocity case.

\section{Conclusion}

We adapted a finite-difference method (from \cite{MR645840}) to accommodate our NLS model with a more general spatially dependent potential, and showed that this adaptation satisfies the appropriate discrete mass and energy conservation laws. Additionally, we implemented this finite difference method in a manner which intrinsically relies on preservation of the mass and energy invariants. We then applied the algorithm to soliton data across high-, low-, and intermediate-velocity regimes. Finally, we examined the computed results in the context of the literature and observed that, for sufficiently small $\Delta x$ and $\Delta t$, the numerical results strongly matched the expected soliton-potential interaction behavior across these three regimes and also exhibited clear preservation of the mass and energy invariants to an acceptable degree.
\bibliography{Working_References}

\end{document}